\numberwithin{equation}{section}
\newtheorem{Definition}{Definition}[section]
\newtheorem{theorem}[Definition]{Theorem}
\newtheorem{lemma}[Definition]{Lemma}
\newtheorem{proposition}[Definition]{Proposition}
\title{\textbf{Generalized Larcombe-Fenessey invariants of matrix powers}}
\author[1]{Sajal Kumar Mukherjee}
\author[2]{Sanjay Mukherjee}
\affil[1]{Institute for Advancing Intelligence, TCG Crest, Kolkata 700091, India. shyamal.sajalmukherjee@gmail.com}
\affil[2]{School of Mathematical Sciences, National Institute of Science Education and Research, Bhubaneswar 752050, India\\
Homi Bhabha National Institute (HBNI), Training School Complex, Anushakti Nagar, Mumbai 400094, India.
sanjay.mukherjee@niser.ac.in}
\date{September 2021}
\begin{document}
\maketitle
\begin{abstract}
In this article, we have found significant generalization of the invariance properties of powers of matrices discovered by Larcombe, Fenessey and further explored by Zeilberger. Moreover, we found interesting new results exhibiting similar phenomena in a more general setup.      
\end{abstract}

\noindent \textbf{Keywords:} matrices; digraphs; simplicial complex; homology

\noindent \textbf{MSC code:}  05A19; 05A05; 05C30; 05C38
\section{Introduction}
Study of identities involving matrices is one of the most prevalent cultures in mathematics. From the well known \textit{Cayley-Hamilton Theorem} to \textit{Grassmann-Pl\"{u}cker Relations}, various type of matrix identities are ubiquitous in the literature of mathematics, especially in combinatorics, algebra, geometry and topology. Recently in \cite{Larcombe} Larcombe found a surprising relation in 2$\times$2 matrices. He proved that,
\begin{theorem}
Let $A$ be a $2\times 2$ matrix with entries from $\mathbb{R}$ and $m$ be any positive integer. Then,
$$A_{12}(A^m)_{21}=A_{21}(A^m)_{12}.$$ 
\end{theorem}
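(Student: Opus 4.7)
The plan is to reduce everything to the Cayley--Hamilton theorem, which makes the identity almost immediate. Writing $A=\begin{pmatrix}a&b\\c&d\end{pmatrix}$, Cayley--Hamilton tells us that $A^{2}=(a+d)A-(ad-bc)I$, so $A^{2}$ lies in the two-dimensional subspace $\operatorname{span}\{I,A\}$ of $M_{2}(\mathbb{R})$. A routine induction on $m$ then shows that for every $m\ge 1$ we can write
\[
A^{m}=\alpha_{m}A+\beta_{m}I
\]
for suitable scalars $\alpha_{m},\beta_{m}\in\mathbb{R}$, where the recursion $\alpha_{m+1}=(a+d)\alpha_{m}+\beta_{m}$, $\beta_{m+1}=-(ad-bc)\alpha_{m}$ is obtained just by multiplying the previous expression by $A$ and reducing with Cayley--Hamilton.

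Once this normal form is in hand, I would simply read off the off-diagonal entries. Since $I$ contributes nothing off the diagonal, we get $(A^{m})_{12}=\alpha_{m}b=\alpha_{m}A_{12}$ and $(A^{m})_{21}=\alpha_{m}c=\alpha_{m}A_{21}$. Multiplying the first by $A_{21}=c$ and the second by $A_{12}=b$ both yield $\alpha_{m}bc$, so
\[
A_{12}(A^{m})_{21}=\alpha_{m}bc=A_{21}(A^{m})_{12},
\]
which is the claimed identity.

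There is no real obstacle in this route; the only point that deserves a sentence of justification is the passage from Cayley--Hamilton to the existence of the scalars $\alpha_{m},\beta_{m}$, which is a one-line induction. A purely inductive alternative --- computing $(A^{m+1})_{12}$ and $(A^{m+1})_{21}$ via the recursion $A^{m+1}=A\cdot A^{m}$ and trying to close up the identity directly --- is possible, but it forces one to carry along the auxiliary invariant $b\bigl[(A^{m})_{11}-(A^{m})_{22}\bigr]=(a-d)(A^{m})_{12}$, which again is precisely the content of the Cayley--Hamilton normal form. I would therefore prefer the one-step Cayley--Hamilton argument and record the inductive version only as a remark, since it already hints at the kind of generalization (to larger matrices, where the minimal polynomial enters) that the rest of the paper is aimed at.
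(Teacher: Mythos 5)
Your Cayley--Hamilton argument is correct: the normal form $A^{m}=\alpha_{m}A+\beta_{m}I$ follows by the one-line induction you describe, and reading off the off-diagonal entries gives $(A^m)_{12}=\alpha_m A_{12}$ and $(A^m)_{21}=\alpha_m A_{21}$, from which the identity is immediate. However, this is a genuinely different route from the one taken in the paper. The paper does not prove Theorem 1.1 directly at all; it observes that the weighted digraph $D(A)$ of a $2\times 2$ matrix has only two vertices, hence no directed cycle of length $\geq 3$, so $A$ is acyclic and Theorem 1.1 is a special case of Theorem 1.2. The proof of Theorem 1.2 interprets $(A^{m})_{ij}$ as a sum of weights of walks of length $m$ from $i$ to $j$ and shows that walk reversal $l\mapsto\acute{l}$ is a weight-preserving bijection between the two sides: a loopless closed walk in an acyclic digraph has a tree as its underlying graph, the associated $1$-chain is a $1$-cycle in that tree, and the vanishing of $H_1$ of a tree forces every edge to be traversed equally often in each direction. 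Your approach buys brevity and complete explicitness in the $2\times 2$ case (and, as you note, isolates the algebraic reason the identity holds there, namely that $A^m$ stays in $\operatorname{span}\{I,A\}$), but it does not scale to the paper's actual targets: for $n\times n$ acyclic or tridiagonal matrices $A^m$ is no longer a linear combination of $I$ and $A$, whereas the walk-reversal/homology argument generalizes verbatim and also yields the stronger cyclic-product identities of Theorems 1.2 and 1.3. So your proof is a valid and arguably more elementary replacement for this one corollary, but it is orthogonal to the machinery the paper builds.
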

\noindent Where $B_{ij}$ denote the $(i,j)-$th entry of the matrix $B.$

In \cite{LarFen} Larcombe and Fenessey proved the above result for $n\times n$ tri-diagonal matrices and later Zeilberger\cite{Z} gave an elegant bijective argument to conclude the aforesaid phenomena. In this paper, we have significantly generalized this result to general class of $n\times n$ matrices and studied the cases where the aforementioned phenomenon will hold in a much stronger form. For instance, the following type of matrices are few of the many, satisfying the Larcombe-Fenessey invariance phenomenon in more generality, which can be derived easily from the results obtained in this paper.
\paragraph{Example:}
\[
  \begin{bmatrix}
    a_{11} & b & b & b & \cdots & b \\
    c & a_{22} & a_{23} & a_{24} & \cdots & a_{2n} \\
    c & a_{23} & a_{33} & a_{34} & \cdots & a_{3n} \\
    c & a_{24} & a_{34} & a_{44} & \cdots & a_{4n} \\
    \vdots & \vdots & \vdots & \vdots & \ddots & \vdots \\
    c & a_{2n} & a_{3n} & a_{4n} & \cdots & a_{nn} \\
  \end{bmatrix}
,
  \begin{bmatrix}
    a_{11} & a_{12} & a_{13} & a_{14} & \cdots & a_{n1} \\
    a_{21} & a_{22} & 0 & \cdots & 0 & a_{21} \\
    a_{31} & 0 & a_{33} & \ddots & \vdots & a_{31} \\
    a_{41} & \vdots & \ddots & a_{44} & 0 & a_{41} \\
    \vdots & 0 & \cdots & 0 & \ddots & \vdots \\
    a_{n1} & a_{12} & a_{13} & a_{14} & \cdots & a_{nn} \\
  \end{bmatrix}.
\]
Before stating the main results of this paper, we define some terminologies. Let $A$ be a matrix of order $n$. We associate with $A$ a weighted-digraph $D(A)$ with $n$ vertices $\{1,2, \ldots, n\}$ and there is a directed edge $(i,j)$ from vertex $i$ to vertex $j$ if and only if $A_{i,j} \neq 0$ and on each directed edge $(i,j)$ put a weight $w((i,j)) = A_{i,j}.$  The matrix $A$ is called \emph{acyclic} if $D(A)$ does not contain any directed cycle of length $\geq 3.$ 

Now let us state our main results.
\begin{theorem}
Let $A$ be an $n \times n$ acyclic matrix with entries belonging to an integral domain $R$. Then for any given $k$ positive integers $m_1, m_2, \cdots, m_{k}$ and for any sequence of positive integers $1 \leq i_1, i_2, \cdots, i_k \leq n $, we have,  
$$(A^{m_1})_{i_1i_2}(A^{m_2})_{i_2i_3}\cdots (A^{m_{k}})_{i_ki_1} = (A^{m_1})_{i_2i_1}(A^{m_2})_{i_3i_2}\cdots (A^{m_{k}})_{i_1i_k}.$$
\end{theorem}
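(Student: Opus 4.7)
The plan is to give a bijective proof in the spirit of Zeilberger's argument for tridiagonal matrices, using the weighted digraph $D(A)$ defined in the excerpt. First, I interpret each entry combinatorially: $(A^m)_{ij}=\sum_W w(W)$, where $W$ ranges over walks $v_0=i,v_1,\dots,v_m=j$ in $D(A)$, weighted by $w(W)=\prod_{s=0}^{m-1} A_{v_s v_{s+1}}$. Expanding the left-hand side then produces the sum $\sum \prod_\ell w(W_\ell)$ over $k$-tuples $(W_1,\dots,W_k)$ in which $W_\ell$ is a walk of length $m_\ell$ from $i_\ell$ to $i_{\ell+1}$ (indices read modulo $k$); the concatenation of these segments is a single closed walk $C$ based at $i_1$ that hits $i_2,\dots,i_k$ at prescribed break points. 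The right-hand side is the analogous sum over tuples $(W'_1,\dots,W'_k)$ in which $W'_\ell$ runs from $i_{\ell+1}$ to $i_\ell$.

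Next, I would introduce the natural candidate bijection $\phi$ between the two index sets: reverse each segment, setting $\phi(W_1,\dots,W_k)=(\widetilde W_1,\dots,\widetilde W_k)$, where $\widetilde W_\ell$ is the vertex sequence of $W_\ell$ read backwards. Writing $n_{uv}$ for the number of times the directed edge $(u,v)$ appears in the concatenated walk $C$, a direct bookkeeping shows $\prod_\ell w(W_\ell)=\prod_{u,v} A_{uv}^{n_{uv}}$ while $\prod_\ell w(\widetilde W_\ell)=\prod_{u,v} A_{uv}^{n_{vu}}$. Hence $\phi$ is weight-preserving, and the identity follows, provided one can show that $n_{uv}=n_{vu}$ for every ordered pair $(u,v)$ in every such closed walk.

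Establishing this edge-reversal symmetry is the sole obstacle, and it is the only place where acyclicity enters. I would argue by an Eulerian-flow reduction: put $m_{uv}=n_{uv}-n_{vu}$, which is skew-symmetric, and observe that the closed-walk balance $\sum_v n_{uv}=\sum_v n_{vu}$ at each vertex yields $\sum_v m_{uv}=0$. The positive part of $m$ thus defines a nonnegative integer flow on the edge set of $D(A)$ whose net flow vanishes at every vertex, and therefore decomposes into directed cycles supported in $D(A)$. Skew-symmetry excludes any $2$-cycle (since $m_{uv}>0$ forces $m_{vu}<0$, so $(v,u)$ is not in the positive part), while $m_{uu}=0$ excludes loops and the acyclicity hypothesis excludes directed cycles of length $\geq 3$. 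Consequently the decomposition is empty, $m\equiv 0$, whence $n_{uv}=n_{vu}$, and the theorem follows.
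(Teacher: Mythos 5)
Your proof is correct, and it shares the paper's overall skeleton (expand each entry as a sum over walks in $D(A)$, concatenate into a closed walk, and show that reversing each segment is a weight-preserving bijection), but the way you establish the crucial edge-reversal symmetry $n_{uv}=n_{vu}$ is genuinely different. The paper encodes the loop-free closed walk as a $1$-chain $g^1=\sum_s [x_s,x_{s+1}]$ on the underlying undirected graph of the walk, observes that $g^1\in\ker\partial_1$ and that acyclicity forces this underlying graph to be a tree $T$, so $H_1(T)=0$ gives $g^1=0$; separate machinery (Lemma 3.2 on weights of chains) then converts this into the weight identity. Your route replaces all of that with an elementary flow argument: the skew-symmetric multiplicity difference $m_{uv}=n_{uv}-n_{vu}$ is a circulation, its positive part decomposes into directed cycles supported in $D(A)$, and loops, $2$-cycles and longer cycles are all excluded (by $m_{uu}=0$, skew-symmetry, and acyclicity respectively), so $m\equiv 0$. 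Since $g^1=0$ is exactly the statement that $n_{uv}=n_{vu}$ for all pairs, the two arguments prove the same intermediate fact by different means. What your version buys is self-containedness and rigor at a point the paper only asserts: the fact that every directed edge of such a closed walk is also traversed in reverse (hence that reversed segments are again walks in $D(A)$, so the bijection is well defined) falls out of $m\equiv 0$, whereas the paper states this and the tree claim with little justification. What the paper's homological setup buys is generality: in Theorem 1.3 and the concluding Remark, $g^1$ is no longer zero but is a $\mathbb{Z}$-combination of weight-symmetric basis cycles of $H_1(K_n)$, and the same chain-level weight lemmas still apply, whereas your cycle-exclusion argument is tailored to the acyclic case.
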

\noindent One can see that any matrix of order $2$ and tri-diagonal matrices are acyclic and obtain the Theorem $1.1$ and the Larcombe-Fenessey theorem as corollaries of the above theorem.

The following theorem also exhibits similar matrix identity in the context, where ``cycles" are present.
\begin{theorem}
Let $A$ be an $n \times n$ matrix with non-zero entries belonging to an integral domain $R$. If for any $1 \leq i < j \leq (n-1)$,  $A_{i,j}A_{j,j+1}A_{j+1,i}= A_{j,i}A_{j+1,j}A_{i,j+1} $, then for any given $k$ positive integers $m_1, m_2, \cdots, m_{k}$ and for any sequence of positive integers $1 \leq i_1, i_2, \cdots, i_k \leq n $, we have,  
$$(A^{m_1})_{i_1i_2}(A^{m_2})_{i_2i_3}\cdots (A^{m_{k}})_{i_ki_1} = (A^{m_1})_{i_2i_1}(A^{m_2})_{i_3i_2}\cdots (A^{m_{k}})_{i_1i_k}.$$
\end{theorem}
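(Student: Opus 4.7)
The plan is to convert the triangle hypothesis into a single conjugation relation $A=DA^TD^{-1}$ over $K=\mathrm{Frac}(R)$ and then telescope around the cyclic sequence $i_1,i_2,\ldots,i_k$. Since every entry of $A$ is nonzero, I may form the ratios $r_{ij}:=A_{ij}/A_{ji}\in K^{*}$ for $i\ne j$; the hypothesis then reads $r_{ij}\,r_{j,j+1}\,r_{j+1,i}=1$ for every $1\le i<j\le n-1$.

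The first step is to upgrade this to arbitrary triples: I claim $r_{ij}\,r_{jk}\,r_{ki}=1$ for all $1\le i<j<k\le n$. I would argue by induction on $k-j$. The base case $k=j+1$ is the hypothesis itself. For the inductive step, applying the hypothesis to the triangles $(j,k-1,k)$ and $(i,k-1,k)$ yields $r_{jk}=r_{j,k-1}\,r_{k-1,k}$ and $r_{ik}=r_{i,k-1}\,r_{k-1,k}$, and combined with the inductive instance $r_{ij}\,r_{j,k-1}=r_{i,k-1}$ these give the claim after the common factor $r_{k-1,k}$ cancels.

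With the triangle identity in hand for every triple, setting $\phi_1:=1$ and $\phi_j:=r_{j1}$ for $j\ge 2$ produces elements $\phi_1,\ldots,\phi_n\in K^{*}$ for which $r_{ij}=\phi_i/\phi_j$; equivalently, $A_{ij}\phi_j=A_{ji}\phi_i$ for all $i,j$. Writing $D:=\mathrm{diag}(\phi_1,\ldots,\phi_n)$, this is exactly $AD=DA^T$, so $A=DA^TD^{-1}$ and a trivial induction on $m$ gives $A^m=D(A^m)^{T}D^{-1}$, i.e.\ the symmetry relation $(A^m)_{ij}\,\phi_j=(A^m)_{ji}\,\phi_i$ for every $m\ge 1$ and every pair $i,j$.

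Substituting this last relation into each of the $k$ factors on the left-hand side rewrites that product as the right-hand side multiplied by $\prod_{\ell=1}^{k}\phi_{i_\ell}/\phi_{i_{\ell+1}}$ (indices read cyclically modulo $k$), and this telescopes to $1$. The resulting equality lives in $R$ because both sides already do. The main obstacle is the inductive step, i.e.\ showing that the distinguished triangles $(i,j,j+1)$ generate every two-cocycle relation on the complete graph on $\{1,\ldots,n\}$; once the potential $\phi$ is extracted, the remainder is a short symmetry-plus-telescope calculation.
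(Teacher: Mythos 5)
Your proof is correct, but it takes a genuinely different route from the paper. The paper interprets each entry $(A^{m})_{ij}$ as a sum of weights of walks in the digraph $D(A)$, exhibits $l\mapsto\acute{l}$ (walk reversal) as the relevant bijection, and reduces the weight-invariance of a closed walk under reversal to a homological fact: the associated $1$-cycle in $K_n$ is a $\mathbb{Z}$-linear combination of the triangles $[i,j]+[j,j+1]+[j+1,i]$, which form a basis of $H_1(K_n)$, and the hypothesis is exactly reversal-invariance of the weight on these basis elements (Proposition 3.1 and Lemma 3.2 handle the multiplicity bookkeeping). You instead pass to the fraction field, show by induction on $k-j$ that the distinguished triangle relations propagate to $r_{ij}r_{jk}r_{ki}=1$ for every triple, extract a potential $\phi$ with $r_{ij}=\phi_i/\phi_j$, and conclude that $A=DA^{T}D^{-1}$ for $D=\mathrm{diag}(\phi_1,\ldots,\phi_n)$, after which the cyclic product telescopes. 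Your inductive step is sound (the auxiliary triangles $(j,k-1,k)$, $(i,k-1,k)$ and the inductive instance $(i,j,k-1)$ are all legitimate, and the cocycle relation is invariant under reordering a triple since transposing two indices inverts the product); it is in substance the multiplicative shadow of the paper's claim that $\mathcal{B}$ is a basis of $H_1(K_n)$. What your approach buys is brevity and transparency: it avoids walks and simplicial homology entirely, identifies the structural reason for the identity (the hypothesis forces $A$ to be diagonally similar to $A^{T}$ over $\mathrm{Frac}(R)$, whence $(A^m)_{ij}\phi_j=(A^m)_{ji}\phi_i$ for all $m$), and yields the conclusion for all $k$ and all index sequences in one telescoping line. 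What the paper's approach buys is the generalization recorded in its closing remark: for matrices with a symmetric zero pattern one can replace the specific triangles by any set of directed cycles whose classes form a basis of $H_1$ of the underlying graph, a statement that is natural in the homological framework but would require the classical cycle-product characterization of diagonal similarity to reach from your side.
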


\section{Definitions and preliminaries}
An \emph{abstract simplicial complex}(or simply a \emph{simplicial
complex})\cite{Deo} $K$ is defined to be an ordered pair $(V, \mathcal{F})$, where $V$ is any finite set (we only
consider finite case here and from now on we will take the set V to be a finite subset
of positive integers) and $\mathcal{F} \subseteq P(V )$, the power set of $V$ satisfying the condition: if
$\sigma \in \mathcal{F}$ and $\tau \subseteq \sigma $, then $\tau \in \mathcal{F} $. The non-empty elements of $\mathcal{F}$ are called the \emph{faces} of the simplicial complex $K$. \emph{Dimension} of the face $\sigma$, denoted by $dim(\sigma)$ is defined to be $\vert \sigma \vert - 1$, where $\vert \sigma \vert $ is the
cardinality of $\sigma$. The zero-dimensional faces of $K$ are called \emph{vertices} of $K$. A $q-$dimensional face of $K$ is also called a $q - $simplex of $K$. The \emph{dimension} of a simplicial
complex is defined to be the maximum of the dimensions of its simplices. Note that,
graphs are one dimensional simplicial complexes.

Let $\sigma$ be a $q-$simplex of $K$. Two orderings of its vertex set are equivalent if they
differ by an even permutation. If $dim(\sigma) > 0$ then the orderings of the vertices of $\sigma$
fall into two equivalence classes. Each class is called an \emph{orientation} of $\sigma$. An \emph{oriented}
simplex is a simplex $\sigma$ together with an orientation of $\sigma$. Let $a_0, a_1, \cdots, a_q$ be an
ordering of the vertices of $\sigma$. Then we shall use the symbol  $[a_0, a_1, \cdots, a_q]$ to denote the
oriented simplex. Let $b_0, b_1, \cdots, b_q$ be another vertex-ordering, which differs
from the previous ordering by an odd permutation, then we write  $[a_0, a_1, \cdots, a_q] = - [b_0, b_1, \cdots, b_q]$.

Let $C_q(K)$ be the free
$\mathbb{Z}-$Module(equivalently the free abelian group) generated by all oriented $q-$simplices of $K$. Now for $q > 0$, we define a
homomorphism $\partial_q : C_q(K) \rightarrow C_{q-1}(K)$ (called $q-$\emph{th boundary operator}) as follows:
Let $\sigma = [v_0, v_1, \cdots, v_q]$ be an oriented $q-$simplex. Then,

$$\partial_q(\sigma):= \sum_{i=0}^q  (-1)^i [v_0, v_1, \cdots, \hat{v_i}, \cdots, v_q], $$ 
where $\hat{v_i}$ means that the vertex $v_i$ is omitted. An element of $ker(\partial_q)$ is called a \emph{q-cycle}. The \emph{q-th homology group} of $K$, denoted by $H_q(K)$ is defined to be the quotient group $\frac{ker(\partial_q)}{Im(\partial_{q+1})}$, which is known to be well defined.

Let us now define some digraph-theoretic notions\cite{Brualdi}, which will be needed subsequently. Let $G$ be a weighted-digraph (i.e. a digraph, where each directed edge $e$ is given a non-zero weight, denoted by $w(e)$). A \emph{walk} from vertex $u$ to vertex $v$ is a sequence $l$ of vertices $u=x_0, x_1, \ldots, x_m = v,$ such that $(x_i, x_{i+1})$ is a directed edge for each $i=0,1, \ldots, m-1.$ The \emph{reverse} of the walk $l$, denoted by $\Acute{l}$ is the sequence of vertices $v= x_m, x_{m-1}, \ldots, x_1=u.$ Note that the reverse of a walk need not always be a walk. Let $l_1$ be a walk from $u$ to $v$ defined as the sequence $u=p_0, p_1, p_2, \ldots, p_m =v $  and $l_2$ be a walk from $v$ to $w$ defined as the sequence $v= q_0, q_1, q_2, \ldots, q_n =w. $  The concatenation of $l_1$ and $l_2$, denoted by $l_1l_2$ is a walk from $u$ to $w$ defined as the sequence $u=p_0, p_1, p_2, \ldots, p_m = q_0, q_1, q_2, \ldots, q_n =w. $ In a weighted digraph, the \emph{weight of a walk l}, denoted by $w(l)$ is the product of the weights of all directed edges of the walk. For a digraph $G$, it's \emph{underlying undirected graph}, denoted by $\widehat{G}$, is defined to be the undirected simple graph $\widehat{G}$ with $V(\widehat{G})=V(G)$ and for any two different vertices $u\neq v$, $u$ is adjacent to $v$ in $V(\widehat{G})$ if and only if either $(u,v)$ or $(v,u)$ is a directed edge in $G.$

\section{Proofs of the theorems}

Before getting into the proofs of the theorems, we will develop a general framework and prove two technical results, which will yield the main theorems.
Let $K$ be a  simplicial complex. For some $c_q\in C_q(K)$ with $c_q=\sum_n a_n\sigma_n^q,$ $a_n\in \mathbb{Z}\setminus\{0\}$, we define $\bar{c_q}=\sum_n |a_n|\bar{\sigma_n^q}$, where $\bar{\sigma_n^q}=\sigma_n^q$ when $a_n>0$ and $\bar{\sigma_n^q}=- \sigma_n^q$ when $a_n<0$. Let $\bar{K}$ denote the set $\cup_{\sigma\in K}\{\sigma, - \sigma\}.$ 

Let $R$ be an integral domain and $f:\bar{K}\rightarrow R\setminus\{0\}$ be a function. For a $q$-chain $c_q\in C_q(K)$ with $c_q=\sum_n a_n\sigma_n^q,$ $a_n\in \mathbb{Z}\setminus\{0\}$ and $\sigma_i^q\neq \sigma_j^q$ whenever $i\neq j,$ the \emph{weight} of $c_q$, denoted by $W(c_q)$ is defined as  $W(c_q)(=W(\bar{c_q})):=\prod_nf(\bar{\sigma_n^q})^{|a_n|}$(Note that the condition $\sigma_i^q\neq \sigma_j^q$ whenever $i\neq j$ is necessary for the well-definedness of $W(c_q)$). Note that, the weight of the zero $q$-chain is $1$. It is easy to see that $W(c_q)=\prod_{n}W(\frac{a_n}{|a_n|}\sigma_n^q)^{|a_n|}$. Then we have the following.

\begin{proposition}
Let $S\in C_q(K)$ such that $S=\sum_is_ic^q_i$, where $c^q_1,c^q_2,
\ldots,c^q_m\in C_q(K)$, $s_i\in \mathbb{Z}\setminus\{0\}$ and $W(c_i^q)=W(-c_i^q)$ for all $i\in [m]$. Then $W(S)=W(-S).$     
\end{proposition}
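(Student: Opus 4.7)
The natural object to track is the ratio $\Phi(c) := W(c)/W(-c)$ in the fraction field $\mathrm{Frac}(R)$. Since $W(c),W(-c)\in R\setminus\{0\}$, the equation $W(c)=W(-c)$ is equivalent to $\Phi(c)=1$, so it suffices to show $\Phi(S)=1$. The plan is to establish that although $W$ itself is not multiplicative under chain addition, the ratio $\Phi$ is; more precisely, $\Phi$ will turn out to be a group homomorphism $C_q(K)\to \mathrm{Frac}(R)^\times$, after which the proposition falls out immediately from the hypothesis.

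To derive the homomorphism property I fix an orientation for each unordered $q$-simplex, obtaining a $\mathbb{Z}$-basis $\{\sigma_n\}$ of $C_q(K)$. For a chain $c=\sum_n a_n\sigma_n$ I expand $W(c)=\prod_n f(\bar{\sigma_n})^{|a_n|}$ by splitting the product according to the sign of $a_n$: indices with $a_n>0$ contribute $f(\sigma_n)^{a_n}$, while indices with $a_n<0$ contribute $f(-\sigma_n)^{-a_n}$. Performing the same expansion for $W(-c)$, where every sign flips, and dividing, the sign bookkeeping collapses to the clean formula
\[
\Phi(c) \;=\; \prod_n \left(\frac{f(\sigma_n)}{f(-\sigma_n)}\right)^{a_n},
\]
where terms with $a_n=0$ contribute a factor of $1$. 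From this closed form, the identities $\Phi(c+c')=\Phi(c)\Phi(c')$ and $\Phi(sc)=\Phi(c)^s$ for $s\in\mathbb{Z}$ are manifest, giving exactly the homomorphism property.

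Finally, the hypothesis $W(c_i^q)=W(-c_i^q)$ translates to $\Phi(c_i^q)=1$ for every $i$, so
\[
\Phi(S) \;=\; \Phi\!\left(\sum_i s_i c_i^q\right) \;=\; \prod_i \Phi(c_i^q)^{s_i} \;=\; 1,
\]
from which $W(S)=W(-S)$; the edge case $S=0$ is covered by the convention $W(0)=1$. I expect the only delicate step to be the sign tracking that produces the closed form for $\Phi$; once that is in hand, the homomorphism property and the conclusion are essentially formal.
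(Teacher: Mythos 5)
Your argument is correct, and while the underlying sign bookkeeping is the same as the paper's, the packaging is genuinely different and worth noting. The paper never divides: it introduces the auxiliary products $W_1=\prod_i W(\tfrac{s_i}{|s_i|}c^q_i)^{|s_i|}$ and $W_2=\prod_i W(-\tfrac{s_i}{|s_i|}c^q_i)^{|s_i|}$, observes $W_1=W_2$ from the hypothesis, proves the cross-multiplied identity $W(S)W_2=W(-S)W_1$ by comparing, for each simplex $\tau^q$, the exponents of $f(\tau^q)$ and $f(-\tau^q)$ on the two sides, and then cancels in the integral domain. You instead pass to $\mathrm{Frac}(R)^\times$ and show that $\Phi(c)=W(c)/W(-c)=\prod_n\bigl(f(\sigma_n)/f(-\sigma_n)\bigr)^{a_n}$ is a group homomorphism on $C_q(K)$; this is legitimate because $f$ takes values in $R\setminus\{0\}$ and $R$ is a domain, so all weights are nonzero, and your fixed-orientation basis correctly handles the well-definedness caveat the paper attaches to $W$. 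The per-simplex exponent computation you do to get the closed form for $\Phi$ is exactly the paper's computation that the contribution of $\tau^q$ to $W(S)$ is $f(\tau^q)^{\mathcal{A}-\mathcal{B}}$; what your version buys is a cleaner, more structural statement that also yields the paper's Lemma 3.2 for free (since $\mathcal{W}(\Gamma^q)/\mathcal{W}(-\Gamma^q)=\Phi(g^q)$), at the mild cost of leaving $R$ for its fraction field, which the paper's cross-multiplication deliberately avoids.
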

\begin{proof}
Let us define $W_1:=\prod_{i}W(\frac{s_i}{|s_i|}c^i_q)^{|s_i|}$ and $W_2:=\prod_{i}W(-\frac{s_i}{|s_i|}c^i_q)^{|s_i|}.$ Clearly, $W_1=W_2$. We will show that, 
\begin{equation}
W(S)W_2=W(-S)W_1.\label{eq:3.2}    
\end{equation}

Let $\tau^q$ be a $q$-simplex appearing in at least one of the $c_i^q$'s. Let the coefficient of $\tau^q$ in $c_i^q$ be $t_i.$ Let the coefficient of $\tau^q$ in $S$ be $T_s$. Divide $[m]=A\cup B$ such that $s_it_i\geq 0$ if and only if $i\in A$ and $s_it_i<0$ if and only if $i\in B.$ Let $\mathcal{A}:=\sum_{i\in A}|s_it_i|$ and $\mathcal{B}:=\sum_{j\in B}|s_jt_j|.$ It is easy to see that $T_s=\sum_is_it_i=\mathcal{A}-\mathcal{B}.$ Hence the contribution of $\tau^q$ in $W_1$ is $f(\tau^q)^{\mathcal{A}}f(- \tau^q)^{\mathcal{B}}$ and the contribution of $\tau^q$ in $W_2$ is $f(\tau^q)^{\mathcal{B}}f(-\tau^q)^{\mathcal{A}}$. Without loss of generality, we assume that $T_s\geq 0.$ Then contribution of $\tau^q$ in $W(S)$ is $f(\tau_q)^{\mathcal{A}-\mathcal{B}}$ and the contribution of $\tau^q$ in $W(-S)$ is $f(- \tau^q)^{\mathcal{A}-\mathcal{B}}.$ 

So the contribution of $\tau^q$ to the L.H.S. of \eqref{eq:3.2} is $f(\tau^q)^{\mathcal{A}-\mathcal{B}}f(\tau^q)^{\mathcal{B}}f(-\tau_q)^{\mathcal{A}}=f(\tau^q)^{\mathcal{A}}f(-\tau^q)^{\mathcal{A}}$ and its contribution to the R.H.S. of \eqref{eq:3.2} is $f(-\tau^q)^{\mathcal{A}-\mathcal{B}}f(\tau^q)^{\mathcal{A}}f(-\tau^q)^{\mathcal{B}}=f(-\tau^q)^{\mathcal{A}}f(\tau^q)^{\mathcal{A}}.$ Hence the result follows.
\end{proof}

Now, suppose we have a finite sequence of non-zero $q$-chains of the form $\Gamma^q=\{b_1\sigma_1^q,b_2\sigma_2^q,\ldots,b_m\sigma_m^q\}$, where for each $i$, $\sigma_i^q$ is a $q$-simplex. \\ We define, $\mathcal{W}(\Gamma^q):=\prod_{i=1}^mW(\frac{b_i}{|b_i|}\sigma_i^q)^{|b_i|}$. \\ For the sequence $\Gamma^q=\{b_1\sigma_1^q,b_2\sigma_2^q,\ldots,b_m\sigma_m^q\}$, we define $-\Gamma_q :=\{-b_1\sigma_1^q,-b_2\sigma_2^q,\ldots,-b_m\sigma_m^q\}.$ \\ Hence $\mathcal{W}(-\Gamma^q)=\prod_{i=1}^mW(-\frac{b_i}{|b_i|}\sigma_i^q)^{|b_i|}.$ Then the following holds.
\begin{lemma}
Let $\Gamma^q=\{b_1\sigma_1^q,b_2\sigma_2^q,\ldots,b_m\sigma_m^q\}$ be a finite sequences of non-zero weighted simplices as above. Let $g^q=\sum_{i=1}^mb_i\sigma_i^q$ and $W(g^q)=W(-g^q).$ Then $\mathcal{W}(\Gamma^q)=\mathcal{W}(-\Gamma^q).$
\end{lemma}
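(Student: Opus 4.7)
The plan is to reduce the identity $\mathcal{W}(\Gamma^q) = \mathcal{W}(-\Gamma^q)$ to the hypothesised identity $W(g^q) = W(-g^q)$ by grouping the entries of the sequence $\Gamma^q$ according to the underlying unoriented simplex, and then comparing the exponents of $f(\tau)$ and $f(-\tau)$ that accumulate on each side. The key observation is that $\mathcal{W}$ multiplies one factor per occurrence in $\Gamma^q$ (retaining every copy), whereas $W$ first collapses like terms and only then measures; so the task is to isolate the ``cancelling part'' which occurs in $\mathcal{W}$ but is washed out of $W$, and to verify that this part contributes identically to $\mathcal{W}(\Gamma^q)$ and $\mathcal{W}(-\Gamma^q)$.

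Concretely, I would first partition $[m]$ according to the orbit of $\sigma_i^q$ under the sign involution $\sigma \mapsto -\sigma$. For each orbit, pick a representative $\tau_j$ and set $I_j^{\pm} := \{i : \sigma_i^q = \pm \tau_j\}$. Introducing
$$P_j := \sum_{i \in I_j^+,\, b_i>0} b_i \;+\; \sum_{i \in I_j^-,\, b_i<0} |b_i|, \qquad N_j := \sum_{i \in I_j^+,\, b_i<0} |b_i| \;+\; \sum_{i \in I_j^-,\, b_i>0} b_i,$$
one reads off directly from the definition of $\mathcal{W}$ that
$$\mathcal{W}(\Gamma^q) = \prod_j f(\tau_j)^{P_j}\, f(-\tau_j)^{N_j}, \qquad \mathcal{W}(-\Gamma^q) = \prod_j f(\tau_j)^{N_j}\, f(-\tau_j)^{P_j}.$$
A short check shows that the coefficient of $\tau_j$ in $g^q$ is precisely $P_j - N_j$, hence
$$W(g^q) = \prod_j f(\tau_j)^{\max(P_j - N_j,\,0)}\, f(-\tau_j)^{\max(N_j - P_j,\,0)},$$
and analogously for $W(-g^q)$ with the roles of $P_j$ and $N_j$ swapped.

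The proof is then concluded by factoring out the ``symmetric piece''. Setting $M_j := \min(P_j, N_j)$, one has $P_j = M_j + \max(P_j - N_j,0)$ and $N_j = M_j + \max(N_j - P_j,0)$; putting $C := \prod_j \bigl(f(\tau_j) f(-\tau_j)\bigr)^{M_j}$, these splittings yield
$$\mathcal{W}(\Gamma^q) = C \cdot W(g^q), \qquad \mathcal{W}(-\Gamma^q) = C \cdot W(-g^q).$$
The hypothesis $W(g^q) = W(-g^q)$ then gives $\mathcal{W}(\Gamma^q) = \mathcal{W}(-\Gamma^q)$ immediately. No division in $R$ is needed, so the integral-domain hypothesis enters only through the requirement that $f$ takes nonzero values.

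The only subtle point, and the main place where one could slip, is the bookkeeping of the exponents $P_j$ and $N_j$: because the same unoriented simplex can appear many times in $\Gamma^q$ with coefficients of mixed sign and under either orientation, some care is required when translating the definitions into the identities above. Once the exponents are correctly split as $M_j + \max(\pm(P_j - N_j),0)$, the reduction to the given hypothesis is immediate.
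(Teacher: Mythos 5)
Your proof is correct, and the per-simplex exponent bookkeeping at its core is the same as the paper's: for each underlying simplex you tally the total exponent of $f(\tau)$ versus $f(-\tau)$ accumulated by the positive and negative occurrences (your $P_j$, $N_j$ are the paper's $L$, $\tilde{L}$), and observe that the coefficient in $g^q$ is their difference. Where you diverge is in how the final step is packaged. The paper proves the cross-multiplied identity $W(g^q)\,\mathcal{W}(-\Gamma^q)=W(-g^q)\,\mathcal{W}(\Gamma^q)$ by checking that each simplex contributes $f(\gamma^q)^{L}f(-\gamma^q)^{L}$ to both sides, and then cancels $W(g^q)=W(-g^q)$, which requires that these weights be nonzero and that $R$ be cancellative. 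You instead exhibit the exact factorizations $\mathcal{W}(\Gamma^q)=C\cdot W(g^q)$ and $\mathcal{W}(-\Gamma^q)=C\cdot W(-g^q)$ with the common symmetric factor $C=\prod_j\bigl(f(\tau_j)f(-\tau_j)\bigr)^{\min(P_j,N_j)}$, so the conclusion follows by multiplication alone. This buys a small but genuine gain: no cancellation in $R$ is used, so your argument would survive in an arbitrary commutative ring, whereas the paper's phrasing leans on the integral-domain hypothesis at this step. Your explicit orbit decomposition $I_j^{\pm}$ also handles more carefully a point the paper glosses over, namely that the same unoriented simplex may appear in $\Gamma^q$ under both orientations.
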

\begin{proof}
We will show that, 
\begin{equation}
W(g^q)\mathcal{W}(-\Gamma^q)=W(-g^q)\mathcal{W}(\Gamma^q).\label{eq:3.3}  
\end{equation}

Let $\gamma^q$ be a $q$-simplex appearing at least once in $\Gamma^q$. Let the coefficient of $\gamma^q$ in $g^q$ be $u_g$. Without loss of generality, we assume $u_g\geq0.$ Let $b_{i_1},b_{i_2},\ldots,b_{i_l}$ be the positive coefficients of $\gamma^q$ in $\Gamma^q$ and $b_{j_1},b_{j_2},\ldots,b_{j_{\Tilde{l}}}$ be the negative coefficients of $\gamma^q$ in $\Gamma^q$. Let $L:=|b_{i_1}|+|b_{i_2}|+\cdots+|b_{i_l}|$ and $\Tilde{L}:=|b_{j_1}|+|b_{j_2}|+\cdots+|b_{j_{\Tilde{l}}}|.$ Then it trivially follows that $u_g=L-\Tilde{L}.$ So, the contribution of $\gamma^q$ in $W(g^q)$ and $\mathcal{W}(\Gamma_q)$ are  $f(\gamma^q)^{L-\Tilde{L}}$ and  $f(\gamma^q)^Lf(- \gamma^q)^{\Tilde{L}}$ respectively. Similarly, the contribution of $\gamma^q$ in $W(-g^q)$ and $\mathcal{W}(-\Gamma^q)$ are $f(-\gamma^q)^{L-\Tilde{L}}$ and $f(\gamma^q)^{\Tilde{L}}f(-\gamma^q)^L$ respectively.

So the contribution of $\gamma^q$ in L.H.S. of \eqref{eq:3.3} is $f(\gamma^q)^{L-\Tilde{L}}f(\gamma^q)^{\Tilde{L}}f(-\gamma^q)^L=f(\gamma^q)^Lf(-\gamma^q)^L$ and it's contribution to the R.H.S. of \eqref{eq:3.3} is $f(-\gamma^q)^{L-\Tilde{L}}f(\gamma^q)^Lf(-\gamma^q)^{\Tilde{L}}=f(-\gamma^q)^Lf(\gamma^q)^L.$ Hence, the result follows.
\end{proof}

With these two results in hand, let us get to the proof of the Theorems. We have found it convenient to prove Theorem $1.3$ before going to the proof of Theorem $1.2$, to avoid unnecessary repetition of some arguments. 
\begin{proof}
(Theorem $1.3$): In the digraph $D(A)$, let $L$ be the set of all closed walks of the form $L_1L_2 \ldots L_k$, where $L_1$ is a walk of length $m_1$ from $i_1$ to $i_2$, $L_2$ is a walk of length $m_2$ from $i_2$ to $i_3$ and so on and $L_k$ is a walk of length $m_k$ from $i_k$ to $i_1$. Let $\Acute{L} = \{\Acute{l} : l \in L\}$. Since the entries of $A$ are non-zero, each element of $\Acute{L}$ is a walk. Clearly the L.H.S is the sum of the weights of all walks in $L$, and the R.H.S is the sum of the weights of all walks in $\Acute{L}$. We claim that, $l \mapsto \Acute{l}$ is a weight preserving bijection between the L.H.S and R.H.S of Theorem $1.3$ for $l \in L.$ Suppose, $l \in L.$ Note that, if a loop $c$ occurs $m$ times in $l$, $c$ also occurs $m$ times in $\Acute{l}.$ So, $c$ contributes nothing to the quotient $\frac{w(l)}{w(\Acute{l})}.$  So, for the sake of simplicity, assume that $l$ does not contain any loops. Let $l$ be the sequence $i_1= x_0, x_1, \ldots, x_{r-1}, x_r=i_1.$ Now consider $K_n,$ the complete graph with $n$ vertices. The \emph{weight function} $f$ on the ordered $1$-simplices of $K_n$ is defined as $f([i,j]):= A_{ij}$. Now consider the finite sequence of ordered $1$-simplices in $K_n$ of the form $\Gamma^1 = \{[x_0,x_1], [x_1,x_2], \ldots, [x_{r-1}, x_r]\}.$ Clearly, $w(l)= \mathcal{W}(\Gamma^1)$ and $w(\Acute{l})= \mathcal{W}(-\Gamma^1).$ Let $g^1= [x_0,x_1]+[x_1,x_2]+ \cdots +[x_{r-1}, x_r].$ By the Lemma $3.2$, it is enough to show that $W(g^1)=W(-g^1).$ If $g^1=0,$ we are done. So, suppose that $g^1$ is a non-zero $1$-chain. Also note that $g^1$ is a $1$-cycle. Now it is an easy exercise in simplicial homology theory to show that the set $\mathcal{B}=\{[i,j]+[j,j+1]+[j+1,i], 1 \leq i < j \leq (n-1) \}$ is a basis of the homology group $H_1(K_n).$ So, $g^1$ is a $\mathbb{Z}$-linear combination of the elements of $\mathcal{B}.$ Now apply the Proposition $3.1$, taking $q=1$ and $S=g^1$ and noting that $W(b) = W(-b),$  $ \forall b \in \mathcal{B}$ to conclude the claim.
\end{proof}

Now we come to the proof of the Theorem $1.2$.
\begin{proof}
(Theorem $1.2$): In the digraph $D(A)$, let $L$ be the set of all closed walks of the form $L_1L_2 \ldots L_k$, where $L_1$ is a walk of length $m_1$ from $i_1$ to $i_2$, $L_2$ is a walk of length $m_2$ from $i_2$ to $i_3$ and so on and $L_k$ is a walk of length $m_k$ from $i_k$ to $i_1$. Let $\Acute{L} = \{\Acute{l} : l \in L\}$. It is easy to see that the reverse of each walk in $L$ is also walk. In fact, suppose that $l \in L.$ Since, $l$ is a closed walk and $D(A)$ does not contain any directed cycle of length $\geq 3,$ we conclude that whenever there is a directed edge $(i,j)$ in $l,$ there is also a directed edge $(j,i)$ in $l.$ This argument also shows that either both $L$ and $\Acute{L}$ are empty or both are non-empty. If both are empty, we are done, since both sides of the Theorem $1.2$ are $0$ (note that the L.H.S is the sum of the weights of all walks in $L$, and the R.H.S is the sum of the weights of all walks in $\Acute{L}$). So, suppose that both are non-empty. We claim that, $l \mapsto \Acute{l}$ is a weight preserving bijection between the L.H.S and R.H.S of Theorem $1.2$ for $l \in L.$ Suppose, $l \in L.$ As in the previous proof, we assume that $l$ does not contain any loop. Since $D(A)$ does not contain any directed cycle of length $\geq 3,$ the underlying undirected graph structure of any walk in $D(A)$ without any loop is a tree. In particular, the underlying undirected graph structure of $l$ is a tree, say $T$. Let $l$ be the sequence $i_1= x_0, x_1, \ldots, x_{r-1}, x_r=i_1.$ Now consider the finite sequence of ordered $1$-simplices in $T$ of the form $\Gamma^1 = \{[x_0,x_1], [x_1,x_2], \ldots, [x_{r-1}, x_r]\}.$ Clearly, $w(l)= \mathcal{W}(\Gamma^1)$ and $w(\Acute{l})= \mathcal{W}(-\Gamma^1).$ Let $g^1= [x_0,x_1]+[x_1,x_2]+ \cdots +[x_{r-1}, x_r].$ Since $l$ is a closed walk, $g^1$ is a $1$-cycle in $T.$ Since, $H_1(T)$ vanishes, both $g^1$ and $-g^1$ are zero. So, $W(g^1)=W(-g^1)=1$, and the claim follows by the Lemma $3.2.$
\end{proof}
\textbf{Remark:} The proof of Theorem 1.3 leads us to a more general result. Let $A$ be an $n\times n$ matrix with $A_{ij}=0$ if and only if $A_{ji}=0$ for $i\neq j$. Suppose $\mathcal{C}$ is a set of directed cycles(of length $\geq$ 3) in $D(A)$, forming a basis for $H_1(\widehat{D(A)})$ such that, $w(c)=w(\Acute{c})$ for all $c\in \mathcal{C}.$ Then, for any given $k$ positive integers $m_1, m_2, \cdots, m_{k}$ and for any sequence of positive integers $1 \leq i_1, i_2, \cdots, i_k \leq n $, we have,  
$$(A^{m_1})_{i_1i_2}(A^{m_2})_{i_2i_3}\cdots (A^{m_{k}})_{i_ki_1} = (A^{m_1})_{i_2i_1}(A^{m_2})_{i_3i_2}\cdots (A^{m_{k}})_{i_1i_k}.$$
\textbf{Acknowledgement:}

The first author was supported by Post Doctoral fellowship bestowed by TCG Crest. The second author was supported by the
Council of Scientific and Industrial Research grant no. 09/1248(0004)/2019-EMR-I, Ministry of Human Resource Development, Government of India.

\bibliographystyle{plain}

\begin{thebibliography}{00}
\bibitem{Larcombe}
P. J. Larcombe, \textit{A note on the invariance of the general 2 × 2 matrix anti-diagonal ratio with
increasing matrix powers: four proofs}, Fib. Quarterly \textbf{53}, 360-364 (2015).

\bibitem{LarFen}
P. J. Larcombe and E. J. Fennessey, \textit{A formalised inductive approach to establish the invariance
of antidiagonal ratios with exponentiation for a tri-diagonal matrix of fixed dimension}, Palest. J.
Math. \textbf{9}, 670-672 (2020).

\bibitem{Z}
Shalosh B. Ekhad, Doron Zeilberger, \textit{On Invariance Properties of Entries of Matrix Powers},  \textbf{arXiv:2107.13092v1}(2021).

\bibitem{Deo}
Satya Deo, \textit{Algebraic topology. A primer. Texts and Readings in Mathematics}, \textbf{27}. Hindustan Book Agency, New Delhi, 2018. ISBN: 978-93-86279-67-5.

\bibitem{Brualdi}
Brualdi R.A., Cvetkovic D., \textit{A Combinatorial Approach to Matrix Theory and Its Applications (1st ed.)}, Chapman and Hall/CRC, 2008. https://doi.org/10.1201/9781420082241

\end{thebibliography}

\end{document}